\renewcommand{\eqref}[1]{(\ref{#1})}
\begin{document}

\begin{frontmatter}

  \title{Conditional fiducial models
  \footnote{
Accepted manuscript published as:
 Taraldsen G., Lindqvist B.H., Conditional fiducial models. J. Statist. Plann. Inference (2017),
\url{https://doi.org/10.1016/j.jspi.2017.09.007}.}}
\author[NTNU]{G.~Taraldsen}
\ead{Gunnar.Taraldsen@ntnu.no}
\author[NTNU]{B.~H.~Lindqvist}

\address[NTNU]{
Department of Mathematical Sciences,
Norwegian University of Science and Technology,
7046 Trondheim, NORWAY}

\begin{abstract}%
The fiducial is not unique in general,
but we prove that in a restricted class of models it
is uniquely determined by the sampling distribution of the data.
It depends in particular not on the choice of 
a data generating model.
The arguments lead to a generalization of 
the classical formula found by \citet{Fisher30}.
The restricted class
includes cases with discrete distributions,
the case of the shape parameter in the Gamma distribution,
and also the case of the correlation coefficient in a 
bivariate Gaussian model. 
One of the examples can also be used
in a pedagogical context to demonstrate 
possible difficulties with likelihood-, Bayesian-, and bootstrap-inference.
Examples that demonstrate non-uniqueness are also presented.
It is explained that they can be seen as cases with
restrictions on the parameter space.
Motivated by this the concept of a conditional 
fiducial model is introduced.
This class of models includes the common case
of {\it iid} samples from a one-parameter model investigated by \citet{Hannig13fiducial},
the structural group models investigated by \citet{FRASER},
and also certain models discussed by \citet{FISHER} in his final writing on the subject.

\end{abstract}

\begin{keyword}
Fiducial inference\sep 
Confidence intervals\sep 
Likelihood inference.

MSC 2010 subject classification codes \sep
Primary 62C05\sep secondary 62A01
\end{keyword}

\vekk{
\begin{keyword}[class=AMS]
\kwd[Primary ]{62C05} 
\kwd[; secondary ]{62A01}   
\kwd{62F10}     
\kwd{62F25}   	
\kwd{20N05}   	
\end{keyword}
}

\end{frontmatter}


\newpage

\section{Introduction}

\citet[p.528-531]{Fisher30} introduced the term fiducial distribution of a parameter
by first criticizing the convention of using a non-informative Bayesian prior.
He explains that a constant prior is just as arbitrary as any other
prior by consideration of a constant prior for a reparametrized model.

There has been considerable progress since 1930 on the theory related to the choice
of a non-informative prior 
based on symmetry considerations \citep{Jef46},
entropy \citep{JAYNES}, 
or other information theoretic arguments \citep{Bernardo79refprior,BergerBernardoSun09refprior}.
The argument by \citet{Fisher30} remains, however, perfectly valid also today:
There is information in any particular choice for the prior.
 
As an alternative, for cases where there is lack of prior information,
Fisher introduced his fiducial argument.
According to \citet[p.428]{Fisher50cms}: 
{\em The importance of the paper lies, however, in setting forth a new mode of
reasoning from observations to their hypothetical causes.}
Today, this new mode of reasoning is still in development and
different lines of arguments have been published
\citep{SchwederHjort02Confidence,
TaraldsenLindqvist13fidopt,
XieSingh13confidence,
MartinLiu13fiducialbirnbaum,
HannigIyerLaiLee16review}.

The original argument \citep[p.532]{Fisher30} starts by consideration of the relation
\be{F1}
U = F(X \st \theta)
\ee
where $X$ is the maximum likelihood estimator of the parameter $\theta$, 
and $F (x \st \theta) = \pr^\theta (X \le x)$. 
From this \citet[p.534]{Fisher30} shows, given certain additional assumptions,
that the fiducial probability density $c(\theta)$ of $\theta$ 
is given by differentiation of $F$ with respect to $\theta$: 
\be{F2}
c(\theta \st x) = -\partial_\theta F(x \st \theta)
\ee
%
The assumptions ensure in particular that $U \sim \Uniform (0,1)$ so
discrete distributions are ruled out.

It is common to use the term {\em posterior} instead of the more cumbersome 
{\em Bayesian posterior distribution} when the meaning is clear from the context.
We will likewise use the term {\em fiducial} instead of the more cumbersome 
{\em fiducial distribution} when the meaning is clear from the context.

In this paper, we generalize formula~(\ref{eqF2})
to include cases with discrete distributions, 
and to cases where the initial equation~(\ref{eqF1}) is replaced
by an alternative equation which allows more general $U$.
This leads to a class of models where the fiducial is unique
and determined by the sampling distribution of the data as stated in Theorem~\ref{theo1}.
The generalization in this class of models is consistent with more general 
definitions of a fiducial model as considered
by 
\citet{TaraldsenLindqvist13fidopt}
and \citet{HannigIyerLaiLee16review}.

Motivated by cases where the conclusion in Theorem~\ref{theo1}
fails we introduce in Section~\ref{sCondFidUniqueness} 
the general concept of a conditional fiducial model
which is a generalization of the concept of a fiducial model
\citep{TaraldsenLindqvist13fidopt,HannigIyerLaiLee16review}:
It is a fiducial model with an added condition.
This is also motivated directly
by examples presented originally by \citet[p.138]{FISHER}
and also later by \citet{Seidenfeld92fiducial}.
Conceptually, this can be seen as the most important result in this paper. 

It is shown, by general examples, 
that this gives a possible
approach for fiducial inference in multivariate observations and for group models.
In the group case a natural condition is given by a maximal invariant,
and the inference from the conditional fiducial model 
then coincides with the Bayes posterior from the right Haar prior.
For the conditional fiducial models considered the resulting fiducial is again unique,
but only when both the fiducial model and the condition is given.
The fiducial is, however, not uniquely given by the sampling distribution of the data.


\vekk{

One major source of non-uniqueness of the fiducial is that
the fiducial argument based on equation~(\ref{eqF1}) can be generalized
in many different directions, 
and hence the term {\em fiducial} is used with different interpretations by
different authors.

Historically, up to the present date, 
the most important direction is given by the theory of confidence intervals 
and hypothesis testing as developed by \citet{Neyman37confinterval}.
This line of arguments defines the fiducial based on pivotal quantities,
and recent developments are discussed briefly at the end of this paper in Section~\ref{sDiscussion}.
Section~\ref{sUnique} gives
a definition of the fiducial that is not based on a pivotal,
and formulates Theorem~\ref{theo1} which is the main technical result in this paper.
Examples are presented in Section~\ref{sExamples} and
proofs are presented in Section~\ref{sProof}.
Section~\ref{sCondFidUniqueness} introduces the concept of
a conditional fiducial model and discuss it with examples
linking this to conditional inference based on ancillary statistics.
}

\section{Fiducial inference}
\label{sFiducial}

Fiducial theory, as presented here, is closely connected to
methods for simulation and testing of statistical inference procedures.
This is by itself good motivation for the current interest in fiducial theory. 
Computer simulation of models has become an integral part of
modern statistical practice, and fiducial arguments are
particularly well adapted to this practice.

{\it A fiducial model} $(U, \chi)$, as defined in this paper, 
is defined by a fiducial equation
\begin{equation}
\label{eqFidEq}
x = \chi (u, \theta)
\end{equation}
for an {\it observation} $x$ 
together with a probability distribution $\pr_U^\theta$ 
for $u$ given the {\it parameter} $\theta$.
The definition given by equation~(\ref{eqFidEq}), 
and its relation to other possible definitions,
is discussed in more detail by \citet{TaraldsenLindqvist16condprob,TaraldsenLindqvist15fidpost,TaraldsenLindqvist13fidopt}.
The relation~(\ref{eqFidEq}) replaces the corresponding relation
$u=F(x,\theta)$ in equation~(\ref{eqF1}) used originally by
\citet{Fisher30}. 
The distribution $\pr_U^\theta$ is
the Monte Carlo law of the fiducial model.
Sampling from the statistical model $\{\pr_X^\theta\}$ is defined by
$x=\chi (u,\theta)$ for a simulated sample $u$ from $\pr_U^\theta$ 
and a fixed model parameter $\theta$.
A fiducial model is hence exactly what is needed for
simulating data from a statistical model:
It is a data generating model.

Sampling from the fiducial is 
defined by solving the fiducial equation 
to obtain $\theta = \htheta (u,x)$
for each simulated sample $u$ and the given observed $x$.
Existence and uniqueness of the solution is assumed,
and the fiducial model is then said to be {simple}.
For a {\em simple fiducial model} it is also 
assumed that the Monte Carlo law $\pr_U^\theta$
does not depend on $\theta$.
More general cases can be considered,
but the theory then splits into several alternative theories 
\citep{Dempster68bayesgen,
Shafer08belief,
FRASER,Wilkinson77fiducial,
DawidStone82,TaraldsenLindqvist13fidopt,
MartinLiu13fiducialbirnbaum,HannigIyerLaiLee16review}.
\vekk{\citet{TaraldsenLindqvist16condprob} explains that the theory as
presented here can be formulated naturally within 
the theory of conditional probability spaces as formulated by \citet{RENYI}.
}


\vekk{
The end result is then a fiducial distribution for the
unknown model parameter.
The fiducial distribution summarizes the knowledge
obtained from the fiducial model and the observations.
This distribution has hence an interpretation
similar to a Bayesian posterior,
but it is obtained without a prior distribution for the parameter.
}

Formally, in the case of a simple fiducial model,
the fiducial distribution can be defined as follows.
Let $U^x \sim \pr_U^\theta$ and define the fiducial
random quantity $\Theta^x = \htheta (U^x, x)$.
The law of $\Theta^x$ is the fiducial distribution or simply the fiducial.
The interpretation is as for the posterior in a Bayesian
analysis: It is an epistemic probability law derived for the parameter based
on the observation.
The role of the prior and the statistical model in Bayesian analysis
is replaced by use of the fiducial model~(\ref{eqFidEq}) in 
fiducial inference. 
The fiducial is obtained in this case without a prior distribution for the parameter.

A further advantage of the fiducial approach in the case of a simple 
fiducial model is that independent 
samples are produced directly from independent sampling from $\pr_U^\theta$.
Bayesian simulations most often come as dependent samples from a Markov chain.
\citet{FRASER} presents many non-trivial models which can be used
to exemplify this for applied concrete problems.

The fiducial argument can also be used within a frequentist frame of inference.
This is similar to how the Bayesian machine is used to produce
frequentist methods in cases which would otherwise be intractable.
The fiducial approach has then the advantage that
it comes equipped with a method for 
simulating data from the statistical model. 
This is exactly what is needed for repeated testing of any given
inference procedure suggested for the model: 
Fiducial, Bayesian, or obtained by other arguments.
This is a most convenient circumstance.
\vekk{, even though
the testing can be quite computer intensive if
the inference procedure itself is based on the fiducial distribution:
For each simulation of data from the model a corresponding
repeated sampling from the fiducial is required. 
}

It is clear, and this will be exemplified in the next section,
that there exist many different fiducial models for a 
given statistical model.
This is related to the fact that there exist many different algorithms 
for the simulation of data from a given statistical model.

The concept of a fiducial model as used in this paper
is hence not so that it is uniquely determined by the likelihood function.
\citet{FISHER}
insisted that the fiducial distribution should be defined in terms of
the likelihood function.
An example is given in section~{\ref{sExamples}} of a statistical model
where the likelihood is not defined,
and a definition based on the likelihood is then impossible.
This explains why we, and many other authors \citep{Dempster68bayesgen,FRASER,
DawidStone82,TaraldsenLindqvist13fidopt,
MartinLiu13fiducialbirnbaum,HannigIyerLaiLee16review},
take equation~(\ref{eqFidEq}) as the initial starting point for fiducial inference.

\section{A unique fiducial}
\label{sUnique}

Assume now that two different fiducial models exist for a given statistical model
$\{\pr_X^\theta\}$. 
A natural question is then:
{\em Do two fiducial models give the same fiducial distribution?}
An affirmative answer to this question follows when 
restricted to the case where both $x$ and $\theta$
are real numbers,
and where $\theta \mapsto \chi(u,\theta)$
is either strictly increasing for all $u$ or strictly decreasing for all $u$.
The fiducial model is then said to be {\em strictly monotonic}.
It is here assumed that the parameter set $\Omega_\Theta$ and the set of possible
observations $\Omega_X$ are subsets of the real line,
or that they are order isomorphic to subsets of the real line.
This includes in particular discrete subsets of the  real line.
The Monte Carlo space $\Omega_U$ is a general measurable space.
\begin{theo}
\label{theo1}
The fiducial distribution of a
strictly monotonic 
simple fiducial model is
uniquely determined by
the sampling distribution of the data.
If, additionally, the sampling distribution is continuous,
then the fiducial distribution is an exact confidence distribution.
\end{theo}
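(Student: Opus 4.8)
The plan is to express the fiducial distribution function directly in terms of the sampling distribution function $F(x \st \theta) = \pr^\theta(X \le x)$, using strict monotonicity to rewrite a fiducial event as a sampling event. The structural fact I would exploit is that for a simple fiducial model the Monte Carlo law $\pr_U$ does not depend on $\theta$, so one and the same law governs both data sampling, $X = \chi(U,\theta)$, and fiducial sampling, $\Theta^x = \htheta(U,x)$. I would treat the strictly increasing case first and obtain the decreasing case by the symmetric argument.

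First I would fix $x$ and a candidate value $\theta$ and analyze the fiducial event $\{\htheta(U,x) \le \theta\}$. Since $\theta \mapsto \chi(u,\theta)$ is strictly increasing and $\htheta(u,x)$ solves $\chi(u,\cdot) = x$, applying the increasing map $\chi(U,\cdot)$ to both sides of $\htheta(U,x) \le \theta$ shows this event coincides with $\{x \le \chi(U,\theta)\}$. Writing $X = \chi(U,\theta)$, which has law $\pr_X^\theta$, the fiducial distribution function $C(\theta \st x) = \pr(\htheta(U,x) \le \theta)$ becomes
\[
C(\theta \st x) = \pr(\htheta(U,x) \le \theta) = \pr^\theta(X \ge x) = 1 - \pr^\theta(X < x),
\]
so it is determined by the (left-continuous version of the) sampling distribution alone. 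In the decreasing case the inequality reverses and one gets $C(\theta \st x) = \pr^\theta(X \le x) = F(x \st \theta)$. Either way $C$ depends only on $\{\pr_X^\theta\}$, which is the first assertion; that $C(\cdot \st x)$ is itself a distribution function is automatic, being the law of the random quantity $\htheta(U,x)$, so no separate monotonicity check in $\theta$ is needed.

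For the second assertion I would add the continuity hypothesis, so that $F(\cdot \st \theta)$ has no atoms and $\pr^\theta(X < x) = F(x \st \theta)$, giving $C(\theta \st x) = 1 - F(x \st \theta)$ (respectively $F(x \st \theta)$). It then remains to verify the defining coverage property of an exact confidence distribution, namely that $C(\theta_0 \st X) \sim \Uniform(0,1)$ when $X \sim \pr_X^{\theta_0}$. This is the probability integral transform: $F(X \st \theta_0)$ is $\Uniform(0,1)$ for a continuous distribution function, and a uniform variable is invariant under $u \mapsto 1-u$. As a consistency check, differentiating $C(\theta \st x) = 1 - F(x \st \theta)$ in $\theta$ recovers $-\partial_\theta F(x \st \theta)$, so the formula reduces to the classical~\eqref{eqF2} in the smooth increasing case.

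I expect the main obstacle to be the careful bookkeeping of strict versus non-strict inequalities and left limits when atoms are present, together with tracking the direction of the inequality across the increasing and decreasing cases so that $C(\cdot \st x)$ emerges as a genuine distribution function rather than its reflection. Once the continuity hypothesis removes the atoms, these subtleties vanish and the probability integral transform closes the argument.
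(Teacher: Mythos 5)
Your proof is correct, and its first half coincides with the paper's: the event identity $\{\htheta(U,x)\le\theta\}=\{x\le\chi(U,\theta)\}$ and the resulting formulas $C(\theta \st x)=1-\pr^\theta(X<x)=1-F(\xm \st \theta)$ (increasing case) and $C(\theta \st x)=F(x \st \theta)$ (decreasing case) are exactly Propositions~\ref{prop1} and~\ref{prop2}, including the left-limit bookkeeping for atoms.

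For the confidence-distribution claim, however, you take a genuinely more direct route than the paper. The paper does not apply the probability integral transform to the formula it has just derived; instead it introduces an auxiliary fiducial model built from the inverse CDF, $\chi_F(u_F,\theta)=F^{-1}(u_F \st \theta)$, shows in~(\ref{eqConfStatement}) that $(-\infty,\hat{\theta}_F(\alpha,X))$ is an exact $(1-\alpha)$-confidence interval, identifies $\hat{\theta}_F(\alpha,x)$ in~(\ref{eqConfStatement1}) with the $(1-\alpha)100$ percentile of that auxiliary model's fiducial, and only then invokes the uniqueness just proved to transfer the coverage statement to the fiducial of the given strictly monotonic model. You bypass the auxiliary model entirely: from $C(\theta_0 \st X)=1-F(X \st \theta_0)$ and the probability integral transform you get $C(\theta_0 \st X)\sim\Uniform(0,1)$ under $\pr^{\theta_0}$, the uniformity characterization of an exact confidence distribution. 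Your route buys brevity and does not require $F(\cdot \st \theta)$ to be invertible in $x$ (the probability integral transform needs only continuity), while the paper's detour buys a conclusion stated literally in the form of its own definition, $\pr^\theta(\theta\le\theta_{1-\alpha}(X))=1-\alpha$, and an explicit link back to Fisher's original 1930 interval argument. The one step you should make explicit if this is to replace the paper's proof is the equivalence between the uniformity property and the percentile-coverage definition stated after Theorem~\ref{theo1}: passing from $C(\theta_0 \st X)\sim\Uniform(0,1)$ to $\pr^{\theta_0}(\theta_0\le\theta_{1-\alpha}(X))=1-\alpha$ requires identifying the events $\{\theta_0\le\theta_{1-\alpha}(X)\}$ and $\{C(\theta_0 \st X)\le 1-\alpha\}$ up to null sets, which holds under the same no-atoms regularity the paper itself assumes; this equivalence is standard but is not free of content.
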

%


The last statement in Theorem~\ref{theo1} means that
$\pr^\theta (\theta \le \theta_{1 - \alpha} (X)) = 1 - \alpha$
where $\theta_{1-\alpha} (x)$ is the $(1-\alpha)100$ percentile
of the fiducial distribution \citep{SchwederHjort16book}.
Theorem~\ref{theo1} will be proved here as a consequence 
of two propositions established next.
The proofs are elementary, 
but all details are given since
the results are novel and of interest by themselves.
It should be observed that there are no restrictions on
the Monte Carlo distribution except that it should
be a probability distribution.
It need in particular not be continuous,
and can in fact be infinite dimensional as
sometimes formally required in Markov Chain Monte Carlo simulations.
The first proposition gives
Fisher's original result
$c(\theta \st x) = -\partial_\theta F(x \st \theta)$ 
for the fiducial density as a special case.
\begin{prop}
\label{prop1}
Assume that 
$(U, \chi)$ is a simple fiducial model such that
$\theta \mapsto \chi (u,\theta)$ is strictly increasing for all $u$,
and let $F(x \st \theta) = \pr^\theta (\chi (U, \theta) \le x)$.
The cumulative distribution function of the fiducial 
given an observation $x$ is then
\be{CIncreasingRepeat}
C (\theta \st x) = 1 - \lim_{\epsilon \downarrow 0} F(x-\epsilon \st \theta) = 1 - F(\xm \st \theta)
\ee
\end{prop}
\begin{proof}
The fiducial distribution is determined by
the following calculation
\begin{subequations}
\label{eqFid3}
\begin{alignat}{2}
C (\theta \st x)
& = \pr^\theta (\htheta (U, x) \le \theta) & & \qquad \text{Fiducial CDF}\\
& = \pr^\theta (x \le \chi (U, \theta)) & & \qquad \text{Strictly increasing}\\
& = \pr^\theta (x \le X) & & \qquad \text{Fiducial model}\\
& = 1 - \pr^\theta (X < x) & & \qquad \text{Complement}\\
& = 1 - \pr^\theta (X \le x) + \pr^\theta (X = x) & & \qquad \text{Could be discrete}\\
& = 1 - F(x \st \theta) + (F(x \st \theta)-F(\xm \st \theta)) & & \qquad \text{Limit for atom}\\
& = 1 - F(\xm \st \theta) & & \qquad \text{Cancellation}
\end{alignat}
\end{subequations}
\hfill $\Box$
\end{proof}
If it is assumed additionally 
that $X$ has a continuous distribution
and that the cumulative distribution function $F(x \st \theta)$
is differentiable with respect to the parameter,
then the fiducial density (\ref{eqF2}) of Fisher  follows as a special case of
(\ref{eqCIncreasingRepeat}).
Formula~(\ref{eqCIncreasingRepeat}) seems to be a novelty in the case 
where atoms are allowed in the statistical model, 
and in particular for a discrete sample space. 
It follows also as a consequence that 
$\theta \mapsto F(\xm \st \theta)$ is continuous from the right and
decreasing from $1$ down to $0$.
These necessary conditions on $F$ 
seem also to be a new observation.

%
\begin{prop}
\label{prop2}
Assume that 
$(U, \chi)$ is a simple fiducial model such that
$\theta \mapsto \chi (u,\theta)$ is strictly decreasing for all $u$,
and let $F(x \st \theta) = \pr^\theta (\chi (U, \theta) \le x)$.
The cumulative distribution function of the fiducial 
given an observation $x$ is then
\be{CDecreasing}
C (\theta \st x) = F(x \st \theta)
\ee
\end{prop}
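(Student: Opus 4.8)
The plan is to mirror the chain of equalities in the proof of Proposition~\ref{prop1}, substituting the strictly decreasing hypothesis where strict monotonicity was used. The starting point is identical: by definition of the fiducial random quantity $\Theta^x = \htheta(U^x, x)$, the fiducial CDF is $C(\theta \st x) = \pr^\theta(\htheta(U, x) \le \theta)$. Because $\theta \mapsto \chi(u, \theta)$ is now strictly \emph{decreasing} for every $u$, the fiducial equation $x = \chi(u, \theta)$ combined with the event $\htheta(U, x) \le \theta$ inverts the inequality: solving for $\theta$ through a decreasing map reverses the direction, so the event should translate into $\chi(U, \theta) \le x$ rather than $x \le \chi(U, \theta)$. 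This is the one step that differs substantively from Proposition~\ref{prop1}, and it is where I would be most careful.

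First I would establish that step precisely. For fixed $u$ and $x$, strict monotonicity gives a unique solution $\htheta(u, x)$ to $x = \chi(u, \htheta(u, x))$. With $\chi$ decreasing in $\theta$, the inequality $\htheta(u, x) \le \theta$ holds iff $\chi(u, \htheta(u, x)) \ge \chi(u, \theta)$, i.e.\ iff $x \ge \chi(u, \theta)$, which is $\chi(u, \theta) \le x$. Thus
\begin{equation}
\label{eqFid4}
C(\theta \st x) = \pr^\theta(\chi(U, \theta) \le x) = \pr^\theta(X \le x) = F(x \st \theta),
\end{equation}
where the middle equality uses the fiducial model $X = \chi(U, \theta)$ under $\pr_U^\theta$ and the last is simply the definition of $F$. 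Note that no atom correction is needed here, in contrast to Proposition~\ref{prop1}: the decreasing case produces the event $\{X \le x\}$ directly, whose probability is $F(x \st \theta)$ with no boundary term to cancel.

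The main obstacle is the orientation of the inequality reversal in the step from $\{\htheta(U, x) \le \theta\}$ to $\{\chi(U, \theta) \le x\}$. The hard part will be checking that strictness of the monotonicity is genuinely what licenses the equivalence of events (so that the two sides define the same measurable set, with no discrepancy on a boundary), and confirming that, unlike the increasing case, the resulting event is $\{X \le x\}$ rather than $\{x \le X\}$, so that the clean identity $C(\theta \st x) = F(x \st \theta)$ emerges without the atom-limit manipulation. I would lay out the derivation as a labeled chain of equalities in the same style as equation~(\ref{eqFid3}), annotating each line (Fiducial CDF; Strictly decreasing; Fiducial model; Definition of $F$), so the parallel with Proposition~\ref{prop1} is transparent and the single point of divergence is visible.
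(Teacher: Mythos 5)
Your proposal is correct and follows essentially the same route as the paper: the identical three-step chain (fiducial CDF, inequality reversal from the strictly decreasing hypothesis, identification via the fiducial model $X=\chi(U,\theta)$), with your explicit verification of the event equivalence $\{\htheta(u,x)\le\theta\}=\{\chi(u,\theta)\le x\}$ being a careful spelling-out of the step the paper labels ``Strictly decreasing.'' Your observation that no atom correction is needed, in contrast to Proposition~\ref{prop1}, is also exactly why the paper's proof terminates after three lines.
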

\begin{proof}
The fiducial distribution is determined by
the following calculation
\begin{subequations}
\label{eqFid3}
\begin{alignat}{2}
C (\theta \st x)
& = \pr^\theta (\htheta (U, x) \le \theta) & & \qquad \text{Fiducial CDF}\\
& = \pr^\theta (\chi (U, \theta) \le x) & & \qquad \text{Strictly decreasing}\\
& = \pr^\theta (X \le x) & & \qquad \text{Fiducial model}
\end{alignat}
\end{subequations}
\hfill $\Box$
\end{proof}
In this case it follows as a consequence that
$\theta \mapsto F(x \st \theta)$ is continuous from the right and
increasing from $0$ up to $1$.
\vekk{
In both the decreasing and increasing cases 
it cannot be concluded that the conditions on $F$ are
sufficient in the sense that it will correspond to
a conventional strictly monotonic fiducial model:
An initial problem is that monotonicity of $\theta \mapsto F(x \st \theta)$
cannot be used to prove existence of a strictly monotonic fiducial model.
}

\vekk{The conditions are, however, sufficient for the definition of a CDF
$C$ directly,
and this gives then a natural generalization of a fiducial
distribution for these cases.
}

\begin{proof}{\bf (of Theorem~\ref{theo1})}
Propositions~\ref{prop1}-\ref{prop2} prove the claimed uniqueness of the fiducial
as stated. 
For the strictly increasing case the exact confidence statement follows 
as in Fisher's original argument:
\be{ConfStatement}
\begin{split}
\alpha & = \pr^\theta \left(F (X \st \theta) < \alpha \right) \\ 
& = \pr^\theta \left(X < F^{-1}(\alpha \st \theta) \right) \\
& = \pr^\theta \left(\hat{\theta}_F (\alpha,X) < \theta \right) 
\end{split}
\ee
where it is assumed that 
the law of $X$ contains no atoms.
The mapping
$\theta \mapsto \chi_F (u_F,\theta) = F^{-1} (u_F, \theta)$ is strictly increasing
since $\theta \mapsto F(x \st \theta) = \pr (\chi(U,\theta) \le x)$
is strictly decreasing from the assumption of
a strictly increasing $\theta \mapsto \chi (u,\theta)$.
Equation~(\ref{eqConfStatement}) implies that
the interval statistic $(-\infty,\hat{\theta}_F (\alpha,x))$ is 
an exact $(1-\alpha)$-confidence interval.
On the other hand
\be{ConfStatement1}
\begin{split}
\pr^\theta (\hat{\theta}_F (U_F,x) \le \hat{\theta}_F (\alpha,x)) =
\pr^\theta (U_F > \alpha) = 1-\alpha
\end{split}
\ee
so the upper limit $\hat{\theta}_F (\alpha,x)$
is identical with the $(1-\alpha)100$ percentile
of the fiducial distribution of 
$\hat{\theta}_F (U_F,x)$.
Since the fiducial distribution is unique it follows
that the upper limit $\hat{\theta}_F (\alpha,x)$
coincides with the $(1-\alpha)100$ percentile
$\theta_{(1-\alpha)} (x)$ of the fiducial distribution of
$\hat{\theta}(U,x)$.
This proves that the fiducial distribution is a confidence distribution in this case.
The strictly decreasing case is similar.
\hfill $\Box$
\end{proof}

The correspondence with the fiducial model based on the
cumulative distribution function 
gives as a by-product that the fiducial distribution in this case is a confidence distribution.
As explained above this argument holds more generally for any strictly monotonic simple fiducial model,
and proves that the fiducial is then 
a confidence distribution in the case of continuous variables.

An elegant direct proof of this follows from the work of
\citet{BolvikenSkovlund96conf} and \citet{LillegaardEngen99confidence}.
The proof presented here 
is more elementary since it is sufficient to consider the original result of Fisher
based on the pivotal $F (X \st \theta) \sim \Uniform (0,1)$.

Theorem~\ref{theo1} is the main technical result of this paper.
It should be observed that the first part is also valid for 
discrete, and more general, distributions.
The resulting fiducial can still be used to define confidence
distributions, but these are then by necessity not exact.

It should be observed that Theorem~\ref{theo1} assumes initially
that there is a strictly monotonic and simple fiducial model,
and then the fiducial is determined by the sampling distribution of the statistic.
This opens up for the possibility of alternative fiducial distributions
based on a fiducial model which is not strictly monotonic.
Additionally, the fiducial will depend on the choice of a statistic.
This is exemplified in the following sections where four possible fiducial distributions
are found for the shape parameter in a gamma distribution.

The usefulness of Theorem~\ref{theo1} as compared with the original paper
by \citet{Fisher30} is that the Monte Carlo variable $U$ is not restricted
to be real and uniformly distributed on $(0,1)$.  
This means that the sampling distribution of the data need not be restricted to be continuous.
Furthermore, the freedom of choice of a suitable Monte Carlo variable can give simplifications. 
A demonstration of the latter will be given in the next section with a three dimensional and an 
$n$ dimensional Monte Carlo variable for the case of inference for 
respectively the correlation coefficient problem of Fisher
and the case of a gamma shape parameter.

\section{Examples with uniqueness and non-uniqueness}
\label{sExamples}

Four examples are presented in this section.
The first two examples concern the correlation coefficient of
a Gaussian distribution and the shape parameter of a gamma distribution.
The resulting exact confidence distributions are
known in the literature,
but are seldom used in applications.
The uniqueness of these fiducial distributions as a result of
Theorem~\ref{theo1} together with the
simple algorithms presented should encourage more widespread use
in applications.
This includes hypothesis testing based on the
resulting confidence intervals and
when reporting uncertainty,
but also for the purpose of providing alternative
estimators adapted to specific loss functions\citep{TaraldsenLindqvist13fidopt}.

The third example gives a discrete model for digitized data.
This exemplifies that cases without a likelihood function
can occur,
and also exemplifies that Theorem~\ref{theo1} gives
a fiducial distribution also for discrete distributions.
\citet{FISHER} restricted his definition of the 
fiducial to the continuous case,
but 
acknowledged simultaneously that essentially all 
existing data sets come as digitized observations 
\citep{Taraldsen06GUMResolution,CisewskiHannig12linearmodels,HannigIyerWang07fiducialquant}.

The fourth example follows from \citet{FISHER}~'s 
final discussion of fiducial inference,
and exemplifies that for non-simple models the
conclusion of Theorem~\ref{theo1} may fail:
There are two reasonable candidates for the fiducial distribution.
It also demonstrates, as do the first two examples,
that the fiducial distribution can be different from a Bayesian posterior.
General conditions that ensure that the fiducial equals a Bayesian posterior
have been obtained recently \citep{TaraldsenLindqvist15fidpost}.

\subsection{The correlation coefficient}
\label{ssCorr}

Consider the strictly increasing simple fiducial model
\be{FidCorr}
x = \frac{\theta u_1 + u_3}{u_2},\;\;
u_1 \sim \chi^2_{n-1}, 
u_2 \sim \chi^2_{n-2}, 
u_3 \sim \Normal (0,1),
\ee
where the components of $u$ are independent.
The notation $u_1 \sim \chi^2_{n-1}$ means that
$u_1$ is a realization from the $\chi^2_{n-1}$ distribution,
and likewise for the other components of $u$.

The fiducial equation~(\ref{eqFidCorr}) can be solved for $\theta$ to give
\be{FidCorrDist}
\theta = \frac{x u_2 - u_3}{u_1},
\ee
for a given observed $x$.
This determines the fiducial as explained in 
Section~{\ref{sUnique}}.

The results of \citet{LindqvistTaraldsen05}
can be used to prove that
the one-one correspondence $x \leftrightarrow r$
\be{FidCorr1} 
r=\frac{x}{\sqrt{1 + x^2}},\;\;
x=\frac{r}{\sqrt{1-r^2}}
\ee
defines a variable $r$
distributed like the empirical 
correlation coefficient of a bivariate 
Gaussian sample of
size $n$ with a correlation coefficient 
\be{FidCorr2} 
\rho = \frac{\theta}{\sqrt{1 + \theta^2}}
\ee
The fiducial distribution of $\rho$ is determined
by the fiducial distribution of $\theta$
and equation~(\ref{eqFidCorr2}).
Altogether, 
this gives a simple method for simulation of
independent samples from the fiducial distribution of the 
correlation coefficient.

An alternative fiducial model
\be{Corr2}
r = F^{-1} (u \st \rho),\;\;\; u \sim \Uniform (0,1)
\ee
follows by inversion of the cumulative distribution
function $F (r \st \rho)$ of the empirical correlation function.
This fiducial model was the one \citet{Fisher30} used
when introducing the fiducial argument.
He tabulated critical values for a case with sample size $n=4$
based on his distribution formula \citep{Fisher15CorrCoeff}.

A numerical simulation from the resulting fiducial distribution
will require more complicated numerical methods than the
explicit solution given by equation~(\ref{eqFidCorr2}):
The cumulative distribution function of the empirical correlation function
and its inverse is missing in standard numerical libraries.
Additionally, equation~(\ref{eqCorr2}) must be solved numerically for
$\rho$ for each simulated $u$.
A somewhat simpler and equivalent approach is to solve
$u = F(r \st \rho)$ for $\rho$ for each simulated $u$.

It is not initially clear, at least according to the present authors,
that the fiducial distribution determined from
equation~(\ref{eqCorr2}) coincides with the fiducial distribution
determined by equation~(\ref{eqFidCorr2}).
Theorem~\ref{theo1} ensures, however, that the resulting fiducial distributions
for the correlation coefficient are identical,
and that the fiducial is a confidence distribution. 
The latter claim follows in this case directly from the original arguments of
\citet{Fisher30} applied on equation~(\ref{eqCorr2}).

The method given above by equation~(\ref{eqFidCorrDist}) 
was presented by \citet[p.1056]{DawidStone82}.
This problem was also considered by
\citet{BolvikenSkovlund96conf} and
\citet{LillegaardEngen99confidence},
but without explicit identification with the fiducial obtained
originally by \citet[p.534]{Fisher30} for the correlation coefficient.
We consider this identification, 
and its generalization in the form of Theorem~\ref{theo1},
to be principally and practically very important.

\citet{Fisher30} calculated as stated above some selected 
percentiles $\rho_{1-\alpha} (x)$ of the fiducial for the correlation coefficient.
It follows from the arguments just given, 
and this is well known in the literature,
that these percentiles are found by solving 
$\alpha = F (x \st \rho_{1-\alpha})$ for $\rho_{1-\alpha}$. 
This is much simpler than the approach given by
simulating from the fiducial,
and explains how this was possible without use of computers.
The reward of simulating from the fiducial is that 
it gives information regarding all levels of confidence simultaneously.
This is similar to the argument in favor of using
p-values in hypothesis testing.
\citet{SchwederHjort02Confidence} and
\citet{XieSingh13confidence} explain in detail that
this is more than a similarity,
and that it is essentially an equivalence.

The obtained uniqueness of the fiducial for $\rho$ is
granted by assuming that the inference is based on the
empirical correlation coefficient.
The fiducial is not unique when the inference is to be based on
the multivariate sample itself since this is not a simple fiducial model
as discussed more generally in section~\ref{sCondFidUniqueness}.
An interesting case is presented by
\citet{Reid03asymptotics} who considered asymptotic inference for the
correlation coefficient for the case where 
the mean and variance is known.
The inference is then based on a two-dimensional minimal sufficient statistic. 
It would be interesting to compare this procedure with the suggested procedure.

\subsection{The Gamma shape parameter}
\label{ssGamma}

A fiducial model for a random sample of size $n$ from the 
gamma distribution with scale parameter $\beta$ and shape parameter $\alpha$
is given by 
\be{FidBartlett1}
y_i = \beta F^{-1}(u_i \st \alpha),\;\; u_i \sim \Uniform (0,1)
\ee
where the components of $u$ are independent and
$F^{-1} (u \st \alpha)$ is the inverse cumulative distribution
function of a gamma variable with shape $\alpha$ and scale $\beta=1$.

Let $\overline{y}$ and $\widetilde{y}$ be the arithmetic and geometric means.
The Bartlett statistic $w$ is defined as the fraction
$w = \widetilde{y}/\overline{y}$.
Equation~(\ref{eqFidBartlett1}) gives the following fiducial model \citep{TaraldsenLindqvist13fidopt}
\be{FidBartlett}
w = \widetilde{F^{-1}}(u_i; \alpha) / \overline{F^{-1}}(u_i; \alpha)
\ee
%
This is a strictly increasing simple fiducial model.
Sampling from the fiducial can be done by simulation from the uniform distribution, 
and solving equation~(\ref{eqFidBartlett}) numerically for each sample. 

An alternative fiducial model for the Barlett statistic
is given by inversion of its cumulative distribution function.
The Monte Carlo law will then simply be the uniform distribution on $(0,1)$,
but the cumulative distribution function of the Bartlett statistic
is not available in standard numerical libraries.
This implementation of the fiducial distribution for the
shape parameter is hence less straightforward than the one given by
equation~(\ref{eqFidBartlett}).
Theorem~\ref{theo1} ensures, however, that the two 
resulting fiducial distributions
for the shape parameter are identical,
and that it is a confidence distribution.

It can be observed that the arguments just presented for the
gamma shape parameter are very similar to the
arguments given for the correlation coefficient.
In both cases the inversion method gives a fiducial model directly
for the parameter of interest, 
and the exact confidence property 
holds due to the original arguments given by \citet{Fisher30}.
Simulation from this fiducial model is possible,
but an alternative fiducial model gives a simplified algorithm.

Regarding uniqueness the situation is again similar to
the correlation coefficient case:
The obtained uniqueness of the fiducial for $\alpha$ is
granted only by assuming that the inference is based on the
Bartlett statistic.
The fiducial is not unique when the inference is to be based on
the initial multivariate fiducial model~(\ref{eqFidBartlett1}).
This is discussed in some more generality in Subsection~\ref{ssRepeated}
which also gives three other alternative fiducial distributions for $\alpha$.

A similar approach can be used on many other models including in particular
the exponential family investigated by \citet{VeroneseMelilli14fiducial}.   
They follow Fisher and define the fiducial by 
the cumulative distribution function, 
and investigate into
explicit and asymptotic expressions for the fiducial density.
Theorem~\ref{theo1} applies for these cases also,
and it can hence be used to find alternative methods for
simulation from the fiducial as exemplified by the correlation coefficient and 
shape parameter.

\blind{
The Bartlett statistic depends only on the shape parameter $\theta=\alpha$
and this is here identified as the model parameter for simplicity.
\citet{TaraldsenLindqvist13fidopt} considered the case where
$(\alpha, \beta)$ is the model parameter,
but the resulting inference for the shape $\alpha$ is identical.
The combined results give a joint confidence distribution for
$(\alpha,\beta)$ where the marginal distribution for the shape
$\alpha$ is also a confidence distribution.
}


\subsection{Digitized data}
\label{sDiscrete}

\citet{FISHER} explicitly stated that the fiducial argument
was not to be used for discrete distributions.
One reason for this is that his argument fails in this case since
$F (X \st \theta)$ is not uniformly distributed when $X$
is discrete.
The following demonstrates that the fiducial defined
by equation~(\ref{eqFidEq}) can handle the discrete case.
Fisher's original result
$c(\theta \st x) = -\partial_\theta F(x \st \theta)$
for the fiducial density cannot be used,
but a more general formula based on the cumulative distribution
$F (x \st \theta)$ is demonstrated to be valid.

Incidentally, the example also demonstrates that likelihood inference
is not generally available:
There is no likelihood corresponding to this model since
the distribution is not given by a density with respect to a measure
that does not depend on the parameter.
This means that a definition of the fiducial based on 
the likelihood will fail in this case.

Let the fiducial model be the location model
\be{FidLoc}
x = u + \theta, 
\ee
where $x$, $\theta$ are arbitrary real numbers
and the Monte Carlo law of $u$ is discrete
and supported on $\ldots,-2d,-d,0,d, 2d, \ldots$ where
$d>0$ is the digital resolution.
The resulting statistical model is supported on
$\theta + \{\ldots,-2d,-d,0,d, 2d, \ldots\}$, 
and the likelihood cannot be defined in the usual way from a density.

The fiducial distribution is simply given by solving equation~(\ref{eqFidLoc})
with respect to $\theta$.
This gives the location model
\be{FidLocTheta}
\theta = x - u, 
\ee
and it follows that the 
cumulative distribution function 
$C (\theta \st x)$ of the fiducial
is a right continuous step function with steps at
$x + \{\ldots,-2d,-d,0,d, 2d, \ldots\}$.
It is also given, as the reader can verify by inspection, 
by the formula
\be{CIncreasing}
C (\theta \st x) = 1 - \lim_{\epsilon \downarrow 0} F(x-\epsilon \st \theta) = 1 - F(\xm \st \theta)
\ee
proved more generally in Section~\ref{sUnique}.
This formula coincides with Fisher's original definition
for the density $c(\theta \st x) = -\partial_\theta F(x \st \theta)$
in the case of a continuous distribution,
but this example demonstrates the need for taking a limit
from the left in the general case including atoms in the distribution.

\vekk{
Incidentally,
equation~(\ref{eqFidLoc})
also exemplifies a statistical model that is not given by a
discrete or continuous probability density,
and then also a model where the likelihood function is not definable
in the usual sense.
This means that a definition of the fiducial based on 
the likelihood will fail in this case.
}

If, however, the model assumptions are changed so that
$x$ and $\theta$ are restricted to
$\{\ldots,-2d,-d,0,d, 2d, \ldots\}$,
then the statistical model is given by a density,
and the likelihood coincides with the fiducial.  

\vekk{
A bell shaped distribution can then furthermore be used 
to obtain a model for
digitized data with a precision given by $d > 0$.
}
\vekk{A reviewer suggested that this model is more
natural than the previous.
We agree, but the previous model is included since it 
demonstrates existence of a model were the likelihood function is undefined
as explained above.
}

Assume that the Monte Carlo law of $u$
is skewed to the right.
The model in equation~(\ref{eqFidLoc}) then illustrates that the
parametric bootstrap distribution,
which here equals the distribution of $x$ which is taken as an estimator of $\theta$, 
is skewed in the opposite direction
of the confidence distribution given by the fiducial.
Finally, it illustrates that the optimal estimator given by the mean
of the fiducial \citep{TaraldsenLindqvist13fidopt} is different from
the maximum likelihood.
By suitable choices for the Monte Carlo law it can be
illustrated that the maximum likelihood can be arbitrary far off from
the optimal.

All together, 
the simple location model above can be used to illustrate
many theoretically important possibilities.  
The statistical analysis of digitized data is, however, 
a very important practical problem that deserves 
attention not only from the digital signal processing community,
but also from statisticians.

\subsection{A case with two candidate fiducial distributions}
\label{sTwoFiducials}

The previous three examples demonstrated usage of
Theorem~\ref{theo1}.
The example presented next 
exemplifies what can happen if one of the assumptions in 
Theorem~\ref{theo1} is not fulfilled.
It gives a case where two candidate fiducials appear,
and also illustrates how restrictions on the parameter space can be handled
by two different motivations:
%
\begin{enumerate}[a)]
\item The fiducial as an epistemic probability similar to a Bayesian posterior.
\item The fiducial as a confidence distribution. 
\end{enumerate}

Let $x_1, \ldots, x_n$ be a random sample from a Gaussian distribution
with mean $\mu$ and variance $\sigma^2$.
Assume that the variance is known, 
and base the inference on the minimal sufficient statistic $\overline{x}$.
A fiducial model is then given by
\be{fx}
\overline{x} = \mu + \sigma \overline{u}
\ee
where $\sigma \overline{u}$ is Gaussian with
mean $0$ and variance $\sigma^2 /n$. 
The fiducial is determined by
\be{fmu}
\mu = \overline{x} - \sigma \overline{u}
\ee
and is hence Gaussian with mean $\overline{x}$ and variance $\sigma^2 /n$.

Assume next as above, 
but with the added restriction $\mu < \mu_{\max}$ 
with a specified upper bound $\mu_{\max}$.
In this case it will be explained that there are two natural candidates 
for the fiducial.

One possible general strategy is to sample from the fiducial by
ignoring the cases where a simulated $u$ does not lead 
to a solution of the fiducial equation.
In the present example this leads to the
fiducial as the previous fiducial (\ref{eqfmu}),
but conditioned on $\mu < \mu_{\max}$.
It is the natural choice when the fiducial is interpreted as
a state of knowledge regarding the unknown parameter.
The additional knowledge given by restrictions on the parameter
is included by conditioning on the restriction.
This will be discussed in more generality in Section~\ref{sCondFidUniqueness}.

The resulting distribution is also the fiducial distribution that would follow
by arguments in recent publications \citep{HannigIyerLaiLee16review}.  
It equals the Bayesian posterior obtained from a normalization of the likelihood,
and inherits corresponding properties regarding optimal inference.  
This fiducial is hence equal to a Gaussian distribution normalized to an interval.

Another candidate is obtained by keeping the obtained density 
when $\mu < \mu_{\max}$ and placing a point mass at the terminal point $\mu=\mu_{\max}$
to ensure normalization.
The point mass is hence set equal to the probability of not obtaining a 
solution of the original fiducial model.
This fiducial coincides with the
fiducial as stated by \citet[p.140,eq.116]{FISHER}
in a more complicated example discussed in more detail below.
It can be shown that this fiducial is a confidence
distribution in a natural sense. 

The conclusion is
that for the case of a semi-infinite line there are two competing candidates
that could qualify for the title as being 'the fiducial'. 
Guidance by the aim of obtaining confidence distributions gives the fiducial suggested by Fisher,
but guidance by similarity with a Bayesian posterior gives an alternative. 
The cause of the difficulty lies in the observation that the fiducial model is 
no longer simple when restrictions are put on the parameters, 
and the corresponding failure of existence of a solution $\htheta (u,x)$
for all pairs $(u,x)$ can be handled in different ways. 

This example can be generalized to the case of different interval 
restrictions and to different underlying sampling distributions.
The choice of the Gaussian distribution gave the possibility of 
explicit reference to \citet[p.140,eq.116]{FISHER}.
The argument can also be generalized into a new class of fiducial models as explained next.

\section{Conditional fiducial models}
\label{sCondFidUniqueness}

\vekk{
This section considers more closely 
\citet{FISHER}~'s discussion of fiducial distributions for the case
of restrictions on the parameters.
It is in particular explained that non-uniqueness can occur 
as a consequence of a so called Borel paradox.
Finally, it is explained how uniqueness can be restored in these cases.
}

A {\em conditional fiducial model} $(U, \chi, C)$ 
is defined by a fiducial model $(U, \chi)$
as defined by equation~(\ref{eqFidEq}) together with a condition
\be{CondFid}
C (\theta) = c
\ee
It is here assumed that $C: \Omega_\Theta \into \Omega_C$ is a 
measurable function from the model parameter space $\Omega_\Theta$ into the
set $\Omega_C$ where $c \in \Omega_C$.
Assume that $\Theta^x$ is a random quantity with a distribution equal to
a fiducial distribution obtained by inference based on the fiducial model and the data $x$.
This distribution is unique in the case of a simple fiducial model,
but good candidates exists also in other special cases as described by references in the Introduction. 
Based on a specific $\Theta^x$ the conditional distribution $(\Theta^x \st C=c)$ defines then 
the unique fiducial distribution of the conditional fiducial model.

Consider again the case of {\it iid} sampling from
a normal distribution with known variance and unknown mean $\theta$. 
The knowledge $\theta < \theta_{\max}$ can be realized by 
the indicator function $C (\theta) = (\theta < \theta_{\max})$ and
letting $c=1$.
In this case it is only the single level set 
$\{\theta \st C(\theta) = c\}$ that matters since
$\Pr (C(\Theta^x)=c)>0$.
In general the function $C$ must be specified to 
obtain a well-defined conditional fiducial
to avoid a Borel paradox \citep[p.50]{KOLMOGOROV}.  

\subsection{Parameters restricted to a curve}
\label{ssNonUniquenessFisher}

Consider the case where 
the observation $x$ is given by
a fiducial model
\be{FishLocSrc}
x = \theta + u
\ee
where $x$, $\theta$ and $u$ belong to a Hilbert space.
The fiducial distribution is given by 
\be{FishLoc1}
\theta = x - u
\ee
and the Monte Carlo law for $u$.
A condition $C(\theta) = c$ determines then a unique
conditional fiducial.
The general case of a Hilbert space was considered by \citet{TaraldsenLindqvist13fidopt}
and will also be discussed later in this paper.

This example, 
in the two-dimensional case,
 was treated by Fisher and
motivated the definition of a conditional fiducial model as defined before equation~(\ref{eqCondFid}).
\citet[p.138]{FISHER} considers the case 
where the Monte Carlo law of $u$ in equation~(\ref{eqFishLocSrc}) is a standard bivariate normal and
$\theta$ is the unknown mean identified with a point in the plane.
Fisher considers next the following cases given by
the additional knowledge that $\theta$ lies on:
%
\begin{itemize}
\item A straight line.
\item A circle with radius $R$.
\item A curve in the plane.
\end{itemize}
%
This is illustrated with a more general joint fiducial in Figure~\ref{fig1}
\begin{figure}
\centering
\includegraphics[width=0.9\textwidth]{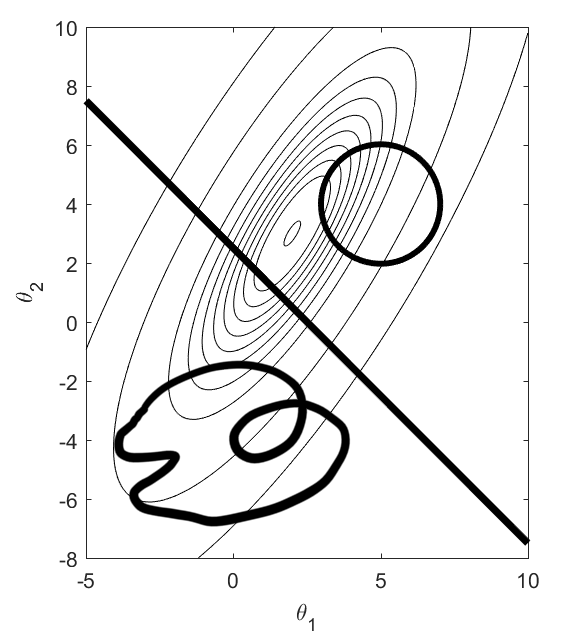}
\caption{
Level curves for a bivariate fiducial together with 
three possible curves for restriction on the parameter space.
}
\label{fig1}
\end{figure}
%

For the first case Fisher derives a fiducial by first proving 
that the projection on the
line is a sufficient statistic.
This follows from the factorization theorem.
The resulting fiducial is the normal distribution on the line centered at
the projection point with unit variance.

For the second case Fisher derives a fiducial by conditioning on the ancillary given by the
distance $a$ from the center of the circle to the observed point.
The resulting fiducial is the von Mises distribution centered at the projection point
with a concentration parameter $\kappa = a R$
\citep{MardiaJupp00statman}.

For the third and most general case \citet[p.142]{FISHER} writes:
\begin{quote}
{\em
In such cases rational
inference is effectively completed by the calculation
of the Mathematical Likelihood for each plausible
position of the unknown point.
}
\end{quote}
In the first two cases, and also in the general it seems, 
the fiducial as identified by Fisher is simply given by a
normalization of the likelihood to the given curve.
In the first two cases the fiducial is also given by the conditional distribution of the
unrestricted fiducial to the given curve.
The statistic conditioned on must have the set of parallel lines respectively 
concentric circles as level sets,
and this defines then a unique conditional fiducial model.
Furthermore, the fiducial coincides with the Bayesian posterior from the uniform prior,
and is also a confidence distribution in these two cases.

Different conditional laws on these curves can be obtained without this requirement.
This was demonstrated originally by \citet{KOLMOGOROV}
in his marginalization paradox
on the sphere, but the same holds in the plane.
Consider in particular the case where the coordinates are chosen such that
the line is given by $\theta_1=\theta_2=\mu$.
The result of Fisher is then obtained by conditioning on $\theta_1 - \theta_2 = 0$.
A different result is obtained by conditioning on $\theta_1/\theta_2 = 1$.
The latter is the choice that follows from a model
where $\mu$ is a scale parameter.

It can be concluded more generally that the fiducial is uniquely given
if the restricting condition is specified by the
value of a particular parameter as just exemplified.
A source for non-uniqueness is then given by the choice of a parameter to condition on.
The parameter $\theta_1 - \theta_2$ and the parameter $\theta_1/\theta_2$ can both
be used to ensure that the fiducial equation has a solution,
but the resulting fiducials on the line $\theta_1 = \theta_2$ are different. 

Fisher also considered the case of a semi-infinite line in the plane.
In this case he argues differently and concludes 
that the fiducial is given
by the normal distribution on the line as before,
but with a point mass at the terminal point to ensure normalization
\citep[p.140,eq.116]{FISHER}.
This is a confidence distribution corresponding to a simple
restriction of the usual unrestricted confidence intervals.

An alternative fiducial, and this coincides with the definition given before 
equation~(\ref{eqCondFid}), can be obtained by conditioning on that
the fiducial equation for the line problem should have a solution on the semi-infinite line.
This alternative fiducial is not a confidence distribution,
but it is in this case a Bayesian posterior with corresponding optimality properties.
This case of a semi-infinite line in the plane was the motivation for the
simpler case treated in subsection~\ref{sTwoFiducials}.

\subsection{Repeated sampling}
\label{ssRepeated}

\vekk{

Consider the case where 
the observation $x$ is given by
a fiducial model
\be{FishLoc}
x = \theta + u
\ee
where $x$, $\theta$ and $u$ belong to a Hilbert space.
The fiducial distribution is given by 
\be{FishLoc1}
\theta = x - u
\ee
and the Monte Carlo law for $u$.

Consider next the same model, 
but with the restriction that $\theta$
lies on the line $\theta_1=\theta_2=\ldots$.
A well defined fiducial is obtained
from the fiducial defined by equation~(\ref{eqFishLoc1})
conditionally given 
$(\theta_2-\theta_1, \theta_3-\theta_1, \ldots)=y$ evaluated at $y=0$.
An alternative fiducial is obtained by conditioning on the alternative parameter
$(\theta_2/\theta_1, \theta_3/\theta_1, \ldots)=y$ evaluated at $y=(1,1,\ldots)$.
The latter is only possibly in the finite dimensional case.

In any case, it follows that to obtain uniqueness the parameter to be conditioned on
must be specified as a part of the fiducial model.
The argument generalizes directly, 
and gives the concept of 
{\em a conditional simple fiducial model}.
}

The case of {\it iid} sampling from a distribution with a scalar parameter
can be identified with a conditional simple fiducial model as explained next.
Consider a family of strictly monotonic fiducial models 
\be{Hannig}
x_i = \chi (u_i, \theta_i)
\ee
where each component model is as in Theorem~\ref{theo1}.
The corresponding joint fiducial model for 
$x = (x_1, \ldots, x_n)$ has a corresponding conditional fiducial model
defined by the condition
$C(\theta) = (\theta_2/\theta_1, \theta_3/\theta_1,\ldots) = (1,\ldots,1)$.
This function $C$ and the value $c=(1,\ldots,1)$ is here assumed to be a part of the specification of the
conditional fiducial model.
The resulting fiducial is unique from the fiducial model~(\ref{eqHannig}) and
the condition $C=c$.

For this case it is possible to calculate the fiducial density
by introducing new coordinates $\alpha_1=\theta_1, \alpha_2=\theta_2/\theta_1, \ldots$.
The resulting fiducial density for the conditional fiducial model is
\be{gt}
c(\alpha \st x) = K \alpha^{n-1} \prod_i \abs{\partial_{\alpha} F(x_i \st \alpha)}
\ee
where $K$ is a normalization constant, 
and $\alpha^{n-1}$ is the Jacobian 
from the change-of-variables.

If the condition is replaced by
$C(\theta)=(\theta_2-\theta_1,\theta_3-\theta_1,\ldots)=0$,
then the above $K \alpha^{n-1}$ must be replaced by $K$.
It can be noted that the resulting fiducial distributions
that follow by these two arguments 
coincide with classical results for scale and location models.
This is no coincidence, and will be discussed further
in subsection~\ref{ssFraser}.

An alternative concrete example is given by equation~(\ref{eqFidBartlett1}) 
for independent samples from the gamma distribution with known scale $\beta$
and unknown shape parameter $\theta_i = \alpha$.
The result is then two alternative fiducial distributions for the shape
parameter in addition to the one resulting from the Bartlett statistic.
We do not know if there exists a third condition $C$ which would reproduce
the fiducial from the Bartlett statistic. 

Let $f=F'$ be the corresponding density of the statistical model
that follows from equation~(\ref{eqHannig}).
\citet[p.506]{Hannig09fiducial} recommends to use the following
fiducial density:
\be{Hannig1}
c (\alpha \st x) = 
K \sum_i f(x_1 \st \alpha) \cdots f(x_{i-1} \st \alpha) 
\abs{\partial_\alpha F(x_i \st \alpha)}
f(x_{i+1} \st \alpha) \cdots f(x_n \st \alpha) 
\ee
It is obtained
by formally applying Bayes rule with a data dependent prior
\be{Hannig1p}
\pi (\alpha,x) = \sum_i \frac{\abs{\partial_\alpha F(x_i \st \alpha)}}{f(x_i \st \alpha)} 
\ee
This formula was obtained from the fiducial model~(\ref{eqHannig})
by additional assumptions and suitable changes of variables to simplify
the conditioning.
The conditioning is defined differently by \citet{Hannig13fiducial}:
Instead of conditioning on $\Theta$ he conditions on
$U$ given that equation~(\ref{eqHannig}) should have a solution with
$\alpha=\theta_i$ for all $i$.
A related approach is
discussed further in subsection~\ref{ssFraser} below.
In all cases one has to add assumptions to equation~(\ref{eqHannig}) 
in order to get a unique fiducial.

It is noteworthy that in the case $n=1$ the formula~(\ref{eqHannig1})
reduces to the one given by \citet{Fisher30}, 
and it is uniquely given by the statistical model.
It can also be noted that the resulting fiducial from equation~(\ref{eqHannig1}) 
in location and scale models coincide with the results found above. 

The previous gives 
four distinct alternative fiducial distributions for the
shape parameter of the gamma distribution.
The fiducial described initially and obtained from the Bartlett statistic
in subsection~\ref{ssGamma} has the
advantage of being an exact confidence distribution and the
presented algorithm gives {\it iid} samples directly.
Samples from the three alternatives here can be produced by
Markov Chain Monte Carlo methods.

\subsection{Group models and conditioning}
\label{ssFraser}

Consider a fiducial model 
\begin{equation}
\label{eqFidEqAgain}
x = \chi (u, \theta)
\end{equation}
as in equation~(\ref{eqFidEq}),
but now in a situation where
a solution $\hat{\theta} (u,x)$ need not exist for all $(u,x)$.
One way to solve this is to introduce an enlarged fiducial model
\begin{equation}
\label{eqFidEqLarge}
x = \chi^* (u, \theta^*)
\end{equation}
with 
$\theta^* \in \Omega_{\Theta^*} \supset \Omega_\Theta$, 
$\chi^* (u,\theta) = \chi (u,\theta)$,
a guaranteed solution $\hat{\theta}^* (u,x)$,
and a condition $C (\theta^*) = c$ such that
\begin{equation}
\label{eqFidEqCondL}
\Omega_\Theta = (C=c) = \{\theta^* \st C(\theta^*)=c\}
\end{equation}
This gives a fiducial distribution,
but it depends on the specific construction and in particular on
the condition $C=c$ as exemplified in the previous two subsections.

An example of this approach is given by the fiducial model
\begin{equation}
\label{eqFidLinear}
x = u + \theta
\end{equation}
where $x$ and $u$ belongs to a Hilbert space $\Omega_X$,
and $\theta$ belong to a subspace $\Omega_\Theta \subset \Omega_X$.
In this case a solution $\theta$ 
does not exist for all $(u,x)$,
but a solution $\hat{\theta}^* (u,x) = x - u$
exists in $\Omega_{\Theta^*} = \Omega_X$.
The corresponding conditional enlarged natural fiducial model
gives the fiducial
\begin{equation}
\label{eqFidCondLinear}
(\Theta^* \st C(\Theta^*)=0) \sim
(x - U \st C(x -U)=0) \sim
(x - U \st C(U)=C(x))
\end{equation}
where $C$ is the projection on the orthogonal complement
of $\Omega_\Theta$. 

In the previous example it was demonstrated that a condition on $\Theta^*$
was equivalent with a condition on $U$.
This will always be the case if the extended model~(\ref{eqFidEqLarge})
defines a one-one correspondence between $u$ and $\theta^*$ for each $x$.
The conditioning in the example is also equivalent to
the conditional statistical model obtained by the condition
\begin{equation}
\label{eqCondStat}
C(X) = c(x)
\end{equation}
since $C(X) = C(U+\theta)=C(U)$.
This is then an example of conditional inference given an ancillary statistic $C$.
Conditional inference can be debatable in general,
but in the present case the result is still optimal inference in a 
frequentist sense \citep{TaraldsenLindqvist13fidopt}.

The previous example for a subspace of a Hilbert space
generalizes verbatim to the case of a group model where 
the parameter space is a subgroup acting on a larger group
and $C$ is a maximal invariant group homomorphism
\citep{TaraldsenLindqvist13fidopt}.

\section{Discussion}
\label{sDiscussion}


Historically, up to the present date, 
the most important development motivated by
the seminal paper by \citet{Fisher30}
is given by the theory of confidence intervals 
and hypothesis testing as developed by \citet{Neyman37confinterval}.
\citet{Birnbaum61} introduced the concept of a confidence curve
to represent confidence intervals or regions simultaneously for all confidence levels
as suggested by others \citep{Tukey49,Cox58}. 
Based on these ideas \citet{Schweder07cc} introduces
the concept of a  confidence  net $N$
which is  a  stochastic  function from  parameter
space  to  the  unit  interval.
The defining assumption
$N (\theta) \sim \Uniform (0,1)$ ensures
that $(N \le \alpha) = \{\theta \st N (\theta) \le \alpha\}$ 
is an $\alpha$-level confidence region.

An example of a confidence net is
given by $\theta \mapsto N(\theta) = F(X; \theta)$ from 
equation~(\ref{eqF1}) and the assumptions made by Fisher.
A reformulation along the lines of \citet{Blaker00cc}
shows that this can be seen as a direct generalization of
equation~(\ref{eqF1}) in that both the parameter and the data can be completely arbitrary.
Both \citet{Blaker00cc} and \citet{Schweder07cc} give
explicit and quite general methods that can be used 
for the construction of confidence nets. 

The previous gives the initial ingredients for a general theory of confidence
distributions for a general parameter space.
A confidence net can be obtained from a confidence distribution together with a
specific confidence region method.
Examples can be given to show that there exist many different confidence nets arising from
a given confidence distribution.

The concept of a fiducial distribution, 
as defined in this paper in Section~\ref{sUnique},
can be used to produce examples of confidence distributions in multiparameter cases.
We prefer, however, to use the term fiducial also for cases
where it is not interpretable as a confidence distribution.
This is in harmony with the final words written on the subject matter
by \citet[p.54-55]{FISHER}: 

\begin{quote}
By  contrast, the fiducial argument uses
the observations only to change  the  logical status of
the parameter from one in which nothing is known
of  it, and no probability statement about  it  can
be made,  to  the status of  a  random variable having
a  well-defined distribution.
\end{quote}

One of the best textbook source on the fiducial controversy known to the authors is the classic text by
\citet{StuartOrdArnold99kendal2a}.
They discuss in particular various
non-uniqueness controversies arising from the fiducial argument when generalized
to the multiparameter case.
One line of arguments focus on pivotal quantities,
and the above discussion of confidence nets can be seen as a
continuation of this line of thoughts.
Uniqueness of a confidence distribution is not generally possible,
but in good situations it can be unique given suitable additional
optimality criteria \citep{SchwederHjort16book}.
An alternative line of arguments focus on the fiducial
as a substitute for a Bayesian posterior in cases without prior information.
Again, general uniqueness seem out of reach, 
and work remains to be done on foundational issues.

Equation~(\ref{eqFidEq}) represents a common ingredient in most recent 
\citep{HannigIyerLaiLee16review}, 
and also many older \citep{FRASER,Dempster68bayesgen,Fraser79inferenceAndLinear,DawidStone82} 
developments of the original fiducial argument of \cite{Fisher30}.
There are essentially two basic complications that may arise: 
\begin{enumerate}
\item The solution with respect to the parameter is non-unique.
\item There is no solution.
\end{enumerate}
Complication 1 can be seen as the source for the development of the Dempster-Shafer theory \citep{Shafer08belief}.
An alternative solution \citep{Hannig13fiducial} is given by the additional introduction of a 
randomization rule that chooses among the
solutions allowed by equation~(\ref{eqFidEq}).

Complication 2 can intuitively be solved by conditioning on 
the existence of a solution.
This is, unfortunately, in many cases of interest technically challenging since the conditioning event may have zero probability
and non-uniqueness may arise as in the original Borel paradox described by \citet{KOLMOGOROV}.
The most appealing approach known to the authors for solving this is represented by
work by \citet{Hannig13fiducial}.
He considers the model as a limit of discretized models.
It has been demonstrated convincingly in many papers  that
this can lead to very good inference procedures in problems otherwise out of reach \citep{HannigIyerLaiLee16review}.

The conditional fiducial models introduced here can in certain cases, as a by-product, 
present an alternative solution to complication 2 as shown.
Existence and uniqueness follows then without the technical measure theoretic complications inherent in
the more general and applicable approach presented by \citet{HannigIyerLaiLee16review}. 
The definition of a conditional fiducial model as given before equation~(\ref{eqCondFid}) is,
however, not intended as a general solution to complication 2.
The main motivation behind the definition is as described in relation to the original examples given by \citet{FISHER}. 

\vekk{    
A different kind of uniqueness is obtained for conditional fiducial models: A simple fiducial model together with a condition gives a conditional fiducial model where the resulting fiducial is unique. It is NOT uniquely given by the statistical model, but it is uniquely given by the conditional fiducial model. It is demonstrated that different conditions, even though the resulting restriction in the parameter space is the same, lead to different fiducial distributions. As a byproduct, it is shown that certain 'overdetermined' fiducial models can be reformulated as conditional fiducial models. This gives an alternative in some cases to the more general approach presented by Hannig and others which obtains a solution in these cases by consideration of limits of discretized models. Existence and uniqueness of these limits is technically more difficult to handle, but it gives an intuitively appealing approach. The concept of a conditional fiducial model is not intended as a replacement of this approach, but it gives an alternative perspective. The choice of limiting approach gives rise to different fiducial distributions. This is similar to the more elementary observation that different conditions in a conditional fiducial model gives different fiducial distributions. The concept of a conditional fiducial model is, however, not introduced mainly to solve the problem of 'overdetermined' fiducial models. It is intended, as suggested initially by Fisher, as a model for the case where we have the additional information given by the condition.
}

According to \citet[p.147]{Pedersen78fiducialDead}
the fiducial argument 
has had a very limited success and was then in 1978 essentially dead.  
Now, in 2017, there are several recent publications
indicating that the fiducial argument is still 
very much alive 
and can be used successfully in both theoretical 
and practical directions
\citep{Efron98,
SchwederHjort02Confidence,
LidongHannigIyer08OneWay,
WangIyer09gumFiducial,
Frenkel09fiducialfiniteresolution,
TaraldsenLindqvist13fidopt,
XieSingh13confidence,
Hannig13fiducial,
MartinLiu13fiducialbirnbaum,
VeroneseMelilli14fiducial,
YangLiuLiuXieHoaglin14confdmeta,
NadarajahBityukovKrasnikov15confdistreview,
TaraldsenLindqvist15fidpost,
HannigIyerLaiLee16review}.
The main contributions in this paper can be summarized by the following.
\begin{itemize}
\item  
Uniqueness of the fiducial distribution in a restricted class of models is
proved in Theorem~\ref{theo1}. 
\item 
The concept of a conditional fiducial model is introduced.
\item 
Examples relating the above two points have been presented 
together with examples that show its historical origin by 
reference to the final work on this by \citet{FISHER}.   
\end{itemize}

\vekk{
\section{Acknowledgments}

The authors acknowledge constructive remarks from two referees.
It was commented that a related uniqueness result was obtained independently 
as a part of the dissertation of Abhishek Pal Majumder defended and recorded in April 2015.
The earliest similar uniqueness result known to the authors is found in the
work of \citet{DawidStone82}. 
}


\section*{References}


\bibliography{bibJA,gtaralds}
\bibliographystyle{plainnat} 

\end{document}